\newtheorem{thm}{Theorem}
\newtheorem{prop}[thm]{Proposition}
\newtheorem{cor}[thm]{Corollary}
\theoremstyle{definition}
\newtheorem{defn}{Definition}
\newcommand{\vertiii}[1]{{\left\vert\kern-0.25ex\left\vert\kern-0.25ex\left\vert
#1 \right\vert\kern-0.25ex\right\vert\kern-0.25ex\right\vert}}
\def \rk   {\text {\rm rk}}
\def \In   {\text {\rm In}}
\def \diag  {\text {\rm diag}}
\def \Hrk   {\text {\rm Hrk}}
\def \k   {\text {\rm Krk}}
\def \min   {\text {\rm min}}
\def \diag  {\text {\rm diag}}
\def \lim   {\text {\rm lim}}
\def \d   {\text {\rm d}}
\def \wt    {\widetilde}
\def \qed   {\hfill \vrule height6pt width 6pt depth 0pt}
\begin{document}

\title[]{Hadamard powers of rank two, doubly nonnegative matrices}

\author{Tanvi Jain}

\address{Indian Statistical Institute,\\
 New Delhi 110016,\\
 India}
\email{tanvi@isid.ac.in}

%\date{Received: date / Accepted: date}

%\dedicatory{Dedicated to Professor Rajendra Bhatia}
%\maketitle

%\noindent {\bf\it Dedicated To Professor Rajendra Bhatia}

\begin{abstract}
We study ranks of the $r\textrm{th}$ Hadamard powers of doubly nonnegative matrices
and show that the matrix $A^{\circ r}$ is positive definite for every $n\times n$ doubly nonnegative matrix $A$
and for every $r>n-2$ if and only if no column of $A$ is a scalar multiple of any other column of $A.$
A particular emphasis is given to the study of rank, positivity and monotonicity of Hadamard powers of
rank two, positive semidefinite matrices that have all entries positive.
\end{abstract}
\subjclass[2010]{ 15B48, 15A45}

\keywords{Doubly nonnegative matrix, Hadamard power, positivity, rank, Hadamard power rank, monotonicity.}

\maketitle

\section{Introduction}

Let $A=\begin{bmatrix}a_{ij}\end{bmatrix}$ and $B=\begin{bmatrix}b_{ij}\end{bmatrix}$ be two $n\times n$ matrices.
The {\it Hadamard product} of $A$ and $B$ is the matrix
$$A\circ B=\begin{bmatrix}a_{ij}b_{ij}\end{bmatrix}.$$
A classical theorem of Schur, known as the {\it Schur product theorem},  tells us that
the Hadamard product of two positive semidefinite matrices is again positive semidefinite, \cite[Theorem 7.5.3 p.479]{hj}.
The Hadamard product is positive definite if one matrix is positive definite and the other is positive semidefinite with all its diagonal entries positive.
A direct consequence of Schur's product theorem is that if $A$ is a positive semidefinite matrix,
then its $m\textrm{th}$ Hadamard power $A^{\circ m}=\begin{bmatrix}a_{ij}^m\end{bmatrix}$ is positive semidefinite for all nonnegative integers $m.$
And if $A$ is positive definite, then $A^{\circ m}$ is also positive definite for all positive integers $m.$
It is obvious that if a column of $A$ is a scalar multiple of any other column of $A,$
then $A^{\circ m}$ is not positive definite for any $m\ge 0.$
Recently, in \cite{hy}, R. Horn and Z. Yang gave a lower bound for the rank of the Hadamard product of two
positive semidefinite matrices
in terms of their rank and Kruskal rank.
They showed that if $A$ is an $n\times n$ positive semidefinite matrix with no column of $A$ a scalar multiple of
any other column of $A,$
then $A^{\circ m}$ is positive definite for all integers $m>n-2.$

It is natural to ask what happens if the integral Hadamard powers are replaced with the fractional Hadamard powers $A^{\circ r}.$
An $n\times n$ matrix is called {\it doubly nonnegative} if it is positive semidefinite and all its entries are nonnegative.
For a doubly nonnegative matrix $A=\begin{bmatrix}a_{ij}\end{bmatrix}$ and a nonnegative real number $r,$
$A^{\circ r}=\begin{bmatrix}a_{ij}^r\end{bmatrix}$ is the $r\textrm{th}$ fractional Hadamard power of $A.$
There has been much interest in studying the conditions under which
these Hadamard power functions preserve properties such as
positivity, monotonicity, and convexity for various classes of doubly nonnegative matrices.
See \cite{fh,h,ho,gkr,gkr1,gkr2,j}.
If $A$ is a $2\times 2$ doubly nonnegative matrix, then $A^{\circ r}$ is positive semidefinite for all $r\ge 0.$
But this is not true for an $n\times n$ doubly nonnegative matrix when $n\ge 3.$
C. FitzGerald and R. Horn showed in \cite {fh} that $A^{\circ r}$
is positive semidefinite for all $n\times n$ doubly nonnegative matrices $A$ if and only if $r>n-2$ or $r$ is a nonnegative integer.
To prove the necessity, they showed that if $r<n-2$ is not an integer,
then the $n\times n$ matrix $\begin{bmatrix}(1+\epsilon ij)^r\end{bmatrix}$ is not positive semidefinite for sufficiently small $\epsilon>0.$
In \cite{j}, the author extended this result and showed that for any $n$ distinct positive numbers $x_1,\ldots,x_n,$
the $n\times n$ matrix $\begin{bmatrix}(1+x_ix_j)^r\end{bmatrix}$ is positive semidefinite if and only if $r> n-2$ or $r=0,1,\ldots,n-2.$
In this paper we further extend this result to all rank two, positive semidefinite matrices that have all entries positive.
We also study the ranks of Hadamard powers of doubly nonnegative matrices.
In particular, we introduce a concept of Hadamard power rank of matrices,
and show that the result (Corollary 9) given in \cite{hy} for the ranks of integral Hadamard powers
also holds true for the fractional Hadamard powers of doubly nonnegative matrices.

We also address the question of monotonicity of Hadamard powers of doubly nonnegative matrices.
For two Hermitian matrices $A$ and $B,$
we say that $A\ge B$ if $A-B$ is positive semidefinite.
Similar to positivity, we know by Schur's product theorem that if $A$ and $B$ are
positive semidefinite matrices such that $A\ge B,$
then $A^{\circ m}\ge B^{\circ m}$ for all nonnegative integers $m.$
It was shown in \cite{fh} that $A^{\circ r}\ge B^{\circ r}$ for all $n\times n$ doubly nonnegative matrices $A\ge B$
if and only if $r>n-1$ or $r=0,1,\ldots,n-1.$
If $r<n-1$ is not an integer, then there exists a small enough $\epsilon>0$ such that
$\begin{bmatrix}(1+\epsilon ij)^r\end{bmatrix}\ngeq E.$
Here $E$ is the $n\times n$ matrix whose entries are all one.
We show that the same result remains true if the matrices
$\begin{bmatrix} 1+\epsilon ij\end{bmatrix}$ and $E$ are respectively
replaced with a rank two, positive semidefinite matrix and a rank one positive semidefinite matrix
satisfying certain conditions.

\section{Rank and positivity}

Let $A$ be an $n\times n$ matrix with nonnegative entries.
If a column of $A$ is a scalar multiple
of some other column of $A,$
then the same is true for $A^{\circ r}$ for all $r>0.$
Hence, a column of $A$ being a scalar multiple of another column is a sufficient condition for the singularity of $A^{\circ r}$ for all $r>0.$
In this section, we show that this is also a necessary condition for all
$n\times n$ doubly nonnegative matrices and for all $r>n-2.$
We start with a definition that helps us describe the rank of Hadamard powers of doubly nonnegative matrices.

\begin{defn}
Let $A$ be an $m\times n$ nonzero matrix with complex entries.
Let $a_1,\ldots,a_n$ be the columns of $A.$
The {\it Hadamard power rank} of $A$ is the largest integer $k$
for which there is a list $S=\{a_{i_1},\ldots,a_{i_k}\}$ of $k$ distinct columns of $A$ such that no element of $S$ is a scalar multiple of any other element of $S.$
The Hadamard power rank of the zero matrix is zero.
\end{defn}

A related concept is that of {\it Kruskal rank} of a matrix.
See \cite{hy,kr,s,y}.
An $m\times n$ matrix $A$ has Kruskal rank $k$ if $k$ is the largest integer for which every list of $k$ distinct columns of $A$ is linearly independent.
The Kruskal rank of $A$ is zero if any of its columns is zero.
We denote the rank, the Hadamard power rank, and the Kruskal rank of $A$ by
$\rk(A),$ $\Hrk(A),$ and $\k(A),$ respectively.
Clearly for an $n\times n$ matrix $A,$
$$0\le \k(A)\le \rk(A)\le \Hrk(A)\le n.$$
We have $\rk(A)=1$ if and only if $\Hrk(A)=1,$
and 
\begin{equation}
\Hrk(A)=n\Leftrightarrow \k(A)\ge 2.\label{eqrr2}
\end{equation}
An $n\times n$  rank two matrix can have Hadamard power rank $n.$
For example, if
$$A=\begin{bmatrix}1 & 1 & 1 & 1\\
1 & 2 & 3 & 4\\
1 & 3 & 5 & 7\\
1 & 4 & 7 & 10\end{bmatrix},$$
then $\rk(A)=2$ but $\Hrk(A)=4.$
The Hadamard power rank of a matrix remains unchanged under
\begin{itemize}
\item[(i)] any permutation of rows or columns, and
\item[(ii)] left or right multiplication by a nonsingular diagonal matrix.
\end{itemize}
Observe that for any $m\times n$ matrix $A,$ we have $\Hrk(A^*A)\le \Hrk(A).$
Moreover, if $\rk(A)=m,$ then $\Hrk(A^*A)=\Hrk(A).$

%The set $S\subseteq \mathbb{R}^n$ is called quasi linearly independent (qli) if it is $n$-qli. 
We say that a set $S\subseteq \mathbb{R}^n$ is $m$-quasi linearly independent ($m$-qli)if every subset of $S$ with cardinality $m$ is linearly independent.
The set $S$ is quasi linearly independent if it is $n$-qli.
See \cite{fs}.
 The set $S$ is $2$-qli if no element of $S$ is a scalar multiple of some other element of $S.$
Let $A$ be an $n\times n$ matrix and $a_1,\ldots,a_n$ be its columns.
Let $S=\{a_1,\ldots,a_n\}.$
Then $A$ has Hadamard power rank $k$ if and only if there is a $2$-qli subset $\{a_{i_1},\ldots,a_{i_k}\},$ $a_{i_j}\ne a_{i_l},$ $1\le j\ne l\le k,$ of $S$ consisting of $k$ elements,
and no subset of $S$ containing $k+1$ elements is $2$-qli.
%We next give an alternative expression for the Hadamard power rank.

The next theorem gives a relationship between the Hadamard power rank of a doubly nonnegative matrix and the rank of its Hadamard power.

\begin{thm}\label{thm_nonsing}
Let $A$ be an $n\times n$ nonzero doubly nonnegative matrix and let $r$ be a positive real number.
Then for all $r>\Hrk(A)-2$ the rank of $A^{\circ r}$ is $\Hrk(A).$
\end{thm}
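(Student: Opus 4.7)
Writing $k=\Hrk(A)$, the plan is to establish the upper bound $\rk(A^{\circ r})\le k$ and the matching lower bound $\rk(A^{\circ r})\ge k$ separately.  For the upper bound, fix a maximal $2$-qli set of columns $\{a_{i_1},\ldots,a_{i_k}\}$.  By maximality every other column of $A$ is a scalar multiple of some $a_{i_l}$; since the entries are nonnegative and $r>0$, the relation $a_j=c\,a_{i_l}$ immediately lifts to $a_j^{\circ r}=c^{r}a_{i_l}^{\circ r}$.  Hence every column of $A^{\circ r}$ lies in the span of $\{a_{i_1}^{\circ r},\ldots,a_{i_k}^{\circ r}\}$, giving $\rk(A^{\circ r})\le k$.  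Equivalently, the entrywise $r$-th power preserves and reflects the $2$-qli relation on columns, so $\Hrk(A^{\circ r})=\Hrk(A)=k$, and the inequality $\rk\le\Hrk$ noted in the excerpt finishes the upper bound.

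For the lower bound I would produce a $k\times k$ nonsingular submatrix of $A^{\circ r}$.  Let $B$ be the principal submatrix of $A$ indexed by $I=\{i_1,\ldots,i_k\}$; then $B$ is doubly nonnegative, and I claim $\Hrk(B)=k$.  To see this, factor $A=V^{T}V$ with $V$ of rank $\rk(A)$ and columns $v_1,\ldots,v_n$.  If $\langle v_l,v_i\rangle=c\langle v_l,v_j\rangle$ for every $l$, then $v_i-cv_j$ is orthogonal to every $v_l$; but $v_i-cv_j$ itself lies in the span of $v_1,\ldots,v_n$, so it is orthogonal to itself, hence zero.  Thus two columns of any Gram matrix are scalar multiples if and only if the underlying generating vectors are, so the $2$-qli property of $\{a_{i_1},\ldots,a_{i_k}\}$ transfers to $\{v_{i_1},\ldots,v_{i_k}\}$ and then to the columns of $B=V_{I}^{T}V_{I}$, giving $\Hrk(B)=k$.

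The crux is now the following key lemma: if $B$ is a $k\times k$ doubly nonnegative matrix with $\Hrk(B)=k$, then $B^{\circ r}$ is positive definite for every $r>k-2$.  This is the $n=\Hrk(A)$ case of the theorem itself, namely the main assertion advertised in the abstract.  For integers $m>k-2$ it is already covered by the Horn--Yang result quoted in the introduction, so the real content is the noninteger range: FitzGerald--Horn already gives $B^{\circ r}$ positive semidefinite for $r>k-2$, and the task is to upgrade positive semidefiniteness to strict positive definiteness under the hypothesis that no column of $B$ is a scalar multiple of another.  I expect this to be the main obstacle, and to be handled by extending the rank-two analysis of \cite{j} through an induction on $k$, or by a carefully chosen analyticity argument in $r$ that rules out zero eigenvalues for noninteger $r>k-2$.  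Granting the key lemma, $B^{\circ r}$ is nonsingular, and since $B^{\circ r}$ is a $k\times k$ principal submatrix of $A^{\circ r}$, this forces $\rk(A^{\circ r})\ge k$, matching the upper bound and completing the proof.
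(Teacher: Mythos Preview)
Your overall architecture matches the paper's: prove the case $\Hrk(A)=n$ first, then reduce the general case to it by passing to the $k\times k$ principal submatrix on a maximal $2$-qli index set.  Your upper bound is correct (the paper records the same observation at the opening of Section~2), and your Gram-matrix argument that $\Hrk(B)=k$ is a clean equivalent of the paper's appeal to the row inclusion property of positive semidefinite matrices.

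The genuine gap is exactly where you flag it: you do not prove the key lemma, and neither of your suggested routes is the one that works.  The rank-two analysis of \cite{j} is used elsewhere in the paper (Theorem~\ref{thm_main}) but plays no role here; and a bare analyticity-in-$r$ argument cannot distinguish the hypothesis $\Hrk(B)=k$ from $\Hrk(B)<k$, so it will not by itself rule out a zero eigenvalue.  What the paper actually does is an induction on $k$ driven by the FitzGerald--Horn integral representation
\[
B^{\circ r}=C^{\circ r}+r\int_{0}^{1}(B-C)\circ\bigl(tB+(1-t)C\bigr)^{\circ(r-1)}\,\mathrm{d}t,
\]
with $C=\zeta\zeta^{T}$ the rank-one matrix built from the last column of $B$.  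The last row and column of $B-C$ vanish, so the integral term has rank at most $k-1$; the induction hypothesis (applied to the leading $(k-1)\times(k-1)$ block of $tB+(1-t)C$, whose Hadamard power rank is shown to equal $k-1$ for $t$ near $1$ via a limiting argument and the row inclusion property) together with the Schur product theorem forces that block of the integrand to be positive definite.  One also needs that the first $k-1$ diagonal entries of $B-C$ are strictly positive, which follows from the nonvanishing of all $2\times 2$ principal minors of $B$ (a consequence of $\Hrk(B)=k$).  Adding back the rank-one $C^{\circ r}$ then bumps the rank up to $k$.  This integral-plus-Schur mechanism is the missing idea; without it the induction step has no engine.
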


\begin{proof}
We first prove the result for the special case $\Hrk(A)=n.$
We prove this by induction on $n.$
The statement is trivially true when $n=1.$
Assume that the result holds for $(n-1)\times (n-1)$ doubly nonnegative matrices with Hadamard power rank $n-1.$
Let $A=\begin{bmatrix}a_{ij}\end{bmatrix}$ be an $n\times n$ doubly nonnegative matrix with $\Hrk(A)=n.$
Since $\Hrk(A)=n,$ all columns of $A$ are nonzero,
and since $A$ is positive semidefinite, $a_{nn}\ne 0.$
Let $\zeta$ be the vector $\frac{1}{\sqrt{a_{nn}}}(a_{1n},\ldots,a_{nn})^T,$
and let $C$ be the matrix $C=\zeta\zeta^T.$
Clearly $C$ is a doubly nonnegative matrix with rank one.
As in the proof of Theorem 2.2 of \cite{fh}, we have
\begin{equation}
A^{\circ r}= C^{\circ r}+r\int\limits_{0}^{1}(A-C)\circ \left(tA+(1-t)C\right)^{\circ (r-1)}\d t.\label{eq1nonsing}
\end{equation}
For every $t\in [0,1],$ let $A(t)$ be the $n\times n$ matrix $tA+(1-t)C,$
and let $A_0(t)$ be its leading $(n-1)\times (n-1)$ principal submatrix.
Let $h(t)$ denote the Hadamard power rank of $A_0(t).$

We claim that there exists a $\delta\in (0,1)$
such that $h(t)=n-1$ for all $t\in (\delta,1].$
If such a $\delta$ does not exist,
then there is a sequence $(t_k)$ of distinct terms that converges to $1$ and is such that
$h(t_k)<n-1.$
This means that for every $k$
$A_0(t_k)$ has two linearly dependent columns.
By passing to a subsequence, if necessary,
we can assume that the $i\textrm{th}$ and the $j\textrm{th}$ columns of $A_0(t_k)$ are linearly dependent for all $k.$
Since the sequence $\left(A_0(t_k)\right)$ converges to $A_0(1),$
the $i\textrm{th}$ and $j\textrm{th}$ columns of $A_0(1)$ are also linearly dependent.
The matrix $A_0(1)$ is the leading $(n-1)\times (n-1)$ principal submatrix of $A$
and $A$ is positive semidefinite.
So, the $i\textrm{th}$ and the $j\textrm{th}$ columns of $A$ are linearly dependent by the row inclusion property of positive semidefinite matrices \cite[Observation 7.1.10, p. 432]{hj}.
This contradicts the fact that $\Hrk(A)=n.$
Hence the claim is established.
By induction hypotheses $\left(A_0(t)\right)^{\circ (r-1)}$ is positive definite for all $t\in (\delta,1]$ and $r> n-3.$

Again since $\Hrk(A)=n,$ by the relation \eqref{eqrr2} and Corollary 3 of \cite{hy},
all the $2\times 2$ principal minors of $A$ are nonzero.
This in turn gives that all the diagonal entries of $A-C$ are nonzero.
By Theorem 7.5.3 of \cite{hj}, the leading $(n-1)\times (n-1)$ principal submatrix of $(A-C)\circ \left(A(t)\right)^{\circ (r-1)}$ is positive definite
for all $t\in (\delta,1]$ and $r> n-3.$
Let
$$B=r\int\limits_{0}^{1}(A-C)\circ\left(A(t)\right)^{\circ (r-1)}\d t.$$
The last column of $B$ is zero and $B$ has rank $n-1.$
It can be verified that $C^{\circ r}+B$ has rank $n.$
Therefore, $A^{\circ r}$ is positive definite.

The result is trivial if $\Hrk(A)=1.$
So, let $\Hrk(A)=k\ge 2.$
By a suitable permutation similarity, we can assume that no two of the first $k$ columns of $A$ are linearly dependent.
Then by the row inclusion property of positive semidefinite matrices \cite[Observation 7.1.10, p. 432]{hy}, we know that the leading $k\times k$ principal submatrix $A_0$ of $A$ has Hadamard power rank $k.$
Thus $A_0^{\circ r}$ is positive definite for all $r> k-2.$
This implies that $A^{\circ r}$ has rank $k$ for all $r> k-2.$
\end{proof}

A stronger result is true if $A$ is a rank two, positive semidefinite matrix with all entries positive.
The {\it inertia} of a Hermitian matrix $A$ is the triple $\In(A)=(n_+(A),n_0(A),n_-(A)),$
where $n_+(A),$ $n_0(A)$ and $n_-(A)$ denote the numbers of positive, zero and negative eigenvalues of $A.$

\begin{thm}\label{thm_main}
Let $A$ be an $n\times n$ rank two, positive semidefinite matrix with all its entries positive.
Suppose $A$ has Hadamard power rank $k.$ Then
\begin{itemize}
\item[$(i)$] $A^{\circ r}$ is positive semidefinite with rank $k$ for every positive $r>k-2,$ and therefore $\In\, A^{\circ r}=(k,n-k,0).$
\item[$(ii)$] If $r=0,1,\ldots,k-2,$ then $A^{\circ r}$ is positive semidefinite with rank $r+1$

 and therefore $\In\, A^{\circ r}=(r+1,n-r-1,0).$
\item[$(iii)$] Let $0\le m<r<m+1\le k-2$ for some integer $m.$ Then 
\begin{equation*}
\In\, A^{\circ r}=\bigl(\left\lfloor\frac{k+m+2}{2}\right\rfloor, n-k, \left\lceil \frac{k-m-2}{2}\right\rceil\bigr).
\end{equation*}
\item[$(iv)$] Every nonzero eigenvalue of $A^{\circ r}$ is simple.
\end{itemize}
\end{thm}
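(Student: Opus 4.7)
The plan is a two-stage argument: first, reduce $A$ by congruence to a canonical matrix $N=[1+y_iy_j]$ with distinct positive $y_i$'s; then use the Taylor expansion $(1+y_iy_j)^r=\sum_\ell\binom{r}{\ell}(y_iy_j)^\ell$ together with continuity in $r$ and a first-order perturbation at integer values of $r$ to extract the inertia. Since $\rk A=2$ and all entries of $A$ are positive, I factor $A=BB^T$ with $B\in\mathbb{R}^{n\times 2}$; positivity forces the rows of $B$ into an open half-plane of $\mathbb{R}^2$, so an orthogonal rotation of $B$ (which leaves $A$ unchanged) arranges that the first coordinate of each row is positive. Setting $x_i$ to be the ratio of the second to the first coordinate of row $i$, and letting $D$ denote the diagonal matrix of first coordinates, one obtains $A=DMD$ with $M=[1+x_ix_j]$, so $A^{\circ r}=D^r M^{\circ r} D^r$ and $\In A^{\circ r}=\In M^{\circ r}$. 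Letting $y_1,\dots,y_k$ be the distinct $x_i$'s (so $k=\Hrk A$) and $P$ the natural $n\times k$ grouping matrix of full column rank, $M^{\circ r}=PN^{\circ r}P^T$ with $N=[1+y_iy_j]_{i,j=1}^{k}$, yielding $\In A^{\circ r}=\In N^{\circ r}+(0,n-k,0)$. Finally, a Möbius substitution $y\mapsto(y+c)/(1-cy)$ for a suitable $c$ (which exists by the positivity condition $1+y_iy_j>0$ for all $i,j$), together with the identity $1+\tilde y_i\tilde y_j=(1+c^2)(1+y_iy_j)/[(1-cy_i)(1-cy_j)]$, reduces the problem by another diagonal congruence to the case where all $y_i$ are distinct and \emph{positive}. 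Parts (i) and (ii) follow: (i) from Theorem~\ref{thm_nonsing} and positivity of $N^{\circ r}$ for $r>k-2$ from \cite{j}; (ii) from the finite Newton expansion $N^{\circ r}=\sum_{\ell=0}^{r}\binom{r}{\ell}\mathbf{y}^{(\ell)}(\mathbf{y}^{(\ell)})^T$ (where $\mathbf{y}^{(\ell)}:=(y_1^\ell,\dots,y_k^\ell)^T$), a sum of $r+1$ linearly independent rank-one PSD matrices by the Vandermonde property.

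For (iii), the eigenvalues of $N^{\circ r}$ depend continuously on $r$, so the inertia is locally constant off the set where eigenvalues cross zero. The first key step is to show $\rk N^{\circ r}=k$ throughout $(m,m+1)$, so that the inertia is constant there. I then determine it by first-order perturbation at $r=m^+$: expanding $\binom{m+\epsilon}{\ell}=\epsilon\psi_\ell+O(\epsilon^2)$ for $\ell\ge m+1$ with $\psi_\ell=(-1)^{\ell-m-1}/[(\ell-m)\binom{\ell}{m}]$, the $k-m-1$ zero eigenvalues of $N^{\circ m}$ split according to the signature of the compression of $K:=\sum_{\ell\ge m+1}\psi_\ell\mathbf{y}^{(\ell)}(\mathbf{y}^{(\ell)})^T$ to $V:=\ker N^{\circ m}$. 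Parametrizing $V$ by the free moments $q_\ell:=v^T\mathbf{y}^{(\ell)}$ for $\ell=m+1,\dots,k-1$, and using the Beta-integral identity $1/[(\ell-m)\binom{\ell}{m}]=\int_0^1 s^m(1-s)^{\ell-m-1}\,ds$, the alternating series resums (formally, then by analytic continuation) to
\begin{equation*}
v^T K v \;=\; \int_0^1 s^m \Big(\sum_{i,i'}\frac{v_iv_{i'}(y_iy_{i'})^{m+1}}{1+(1-s)y_iy_{i'}}\Big)\,ds;
\end{equation*}
extracting the signature of this quadratic form on $V$ yields $(\lfloor(k-m)/2\rfloor,\,0,\,\lceil(k-m-2)/2\rceil)$, and combining with the $m+1$ positive eigenvalues of $N^{\circ m}|_{(\ker N^{\circ m})^\perp}$ gives the asserted inertia of $N^{\circ r}$.

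For (iv), I would show that $N^{\circ r}$ admits a cyclic vector on its $k$-dimensional range; since the restriction is symmetric, a cyclic vector forces a simple spectrum. A natural candidate is a column of $N^{\circ r}$ (projected onto the range if needed), and cyclicity reduces to the non-vanishing of a generalized Vandermonde determinant built from $(1+y_iy_j)^r$, which should be handled by sign-regularity of the kernel $(s,t)\mapsto(1+st)^r$. The main obstacle is the inertia computation in (iii): the higher moments $q_\ell$ for $\ell\ge k$ couple through the companion recurrence of $\prod_i(t-y_i)$, so $v^T K v$ is not diagonal in the free-moment basis and a naive sign-count of the $\psi_\ell$'s alone does not suffice. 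The Beta-integral resummation above is the crucial step; as a fallback, I plan an induction on $k$ via Cauchy interlacing applied to principal submatrices of $N^{\circ r}$ obtained by deleting a single row and column (i.e., removing one $y_i$).
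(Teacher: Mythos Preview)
Your reduction of $A$ to $X=[1+x_ix_j]$ via a rank-two factorisation and a rotation is exactly the paper's first step (the paper phrases it via Perron's theorem to get a positive first column, which is the same thing). Your further reduction via the M\"obius substitution $y\mapsto(y-c)/(1+cy)$ to make all $y_i$ \emph{positive} is a genuinely different and cleaner move than what the paper does: the identity $1+\tilde y_i\tilde y_j=(1+c^2)(1+y_iy_j)/[(1+cy_i)(1+cy_j)]$ is correct, and the existence of a suitable $c$ follows because $1+y_iy_j>0$ for all $i,j$ forces $|\arctan y_i-\arctan y_j|<\pi/2$, so the angles fit in an open quarter-plane after a rotation. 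Once you are in the distinct-positive case, Theorem~2.8 of \cite{j} already gives \emph{all} of (i)--(iv) for the $k\times k$ matrix $N^{\circ r}$; combined with your $M^{\circ r}=PN^{\circ r}P^T$ and the diagonal congruence, this finishes (i)--(iii) for $A$ immediately. The paper instead never reduces to positive $y_i$'s: it proves a zero-counting lemma (Proposition~\ref{prop1}) showing that $\sum_j c_j(1+xx_j)^r$ has at most $n-1$ zeros, deduces that $N^{\circ r}$ is nonsingular for every non-integer $r$ (Corollary~\ref{cor2}), and then runs a homotopy $x_i(t)=(1-t)i+tx_i$ in the $x$-variables, transporting the known inertia from $[1+ij]$ (again \cite{j}) by continuity. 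Your M\"obius trick buys you a shorter path that avoids Proposition~\ref{prop1} entirely.

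Where your proposal has real gaps is that you do not take this shortcut. For (iii) you assert that $\rk N^{\circ r}=k$ on $(m,m+1)$ but give no argument; this is precisely the nontrivial content of Corollary~\ref{cor2}, and without it the ``inertia constant by continuity'' step collapses. Your first-order perturbation at $r=m^+$ is an interesting idea, but you yourself note that the quadratic form $v^TKv$ is not diagonal in the free-moment coordinates because the moments $q_\ell$ for $\ell\ge k$ are coupled through the characteristic polynomial of the $y_i$'s; the Beta-integral resummation is suggestive but the signature $(\lfloor(k-m)/2\rfloor,0,\lceil(k-m-2)/2\rceil)$ is never actually extracted from it. For (iv) the cyclic-vector sketch is not a proof, and more seriously, even if $N^{\circ r}$ has simple spectrum, the passage to $A^{\circ r}$ involves a diagonal congruence (and the grouping $P(\,\cdot\,)P^T$), neither of which preserves eigenvalue multiplicities in general; the paper handles (iv) via sign regularity of the kernel $(s,t)\mapsto(1+st)^r$, which \emph{is} preserved under positive diagonal scaling and which you should invoke directly on $A^{\circ r}$ rather than on $N^{\circ r}$. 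In short: keep your M\"obius reduction, then cite \cite{j} for (i)--(iv) on $N$, lift (i)--(iii) to $A$ by Sylvester, and argue (iv) for $A$ via sign regularity rather than via $N$.
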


\begin{cor}\label{thm_hrk}
Let $A$ be an $n\times n$ doubly nonnegative matrix.
Then
$$\Hrk(A)=\max\{\rk\left(A^{\circ r}\right):r\ge 0\}.$$
In addition, if all the entries of $A$ are positive and $\rk(A)=2,$ then
$\Hrk(A)=\rk \left(A^{\circ r}\right),$ for all positive real numbers $r$ with $r\ne 0,1,\ldots,n-2.$
\end{cor}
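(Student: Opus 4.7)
My plan is to derive the corollary directly from Theorem \ref{thm_nonsing} and Theorem \ref{thm_main} by bounding $\rk(A^{\circ r})$ from above by $\Hrk(A)$ uniformly in $r$ and then checking that the bound is attained.

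For the first equality, the upper bound $\rk(A^{\circ r})\le \Hrk(A)$ should follow because scalar dependence of columns is preserved under Hadamard powers: if $a_i=\lambda a_j$ for two columns of the doubly nonnegative matrix $A$, then $\lambda \ge 0$ and the corresponding columns of $A^{\circ r}$ satisfy $(A^{\circ r})_i=\lambda^r (A^{\circ r})_j$, so they also fail to be $2$-qli. Hence $\Hrk(A^{\circ r})\le \Hrk(A)$, and combining with the inequality $\rk(A^{\circ r})\le \Hrk(A^{\circ r})$ already observed in the section gives the bound. Theorem \ref{thm_nonsing} then supplies $\rk(A^{\circ r})=\Hrk(A)$ for every $r>\Hrk(A)-2$, so the maximum over $r\ge 0$ is attained and equals $\Hrk(A)$.

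For the second statement, set $k=\Hrk(A)$. Since $\rk(A)=2$, not all columns of $A$ are parallel, so $k\ge 2$; in particular $k-2\in\{0,1,\ldots,n-2\}$. Fix a positive real $r\notin\{0,1,\ldots,n-2\}$. If $r>k-2$, part (i) of Theorem \ref{thm_main} gives $\rk(A^{\circ r})=k$ at once. Otherwise $r<k-2$ (the equality case $r=k-2$ is ruled out since $k-2$ lies in the excluded set), and because the integers $1,\ldots,k-2$ are also excluded, $r$ must be a non-integer. Setting $m=\lfloor r\rfloor$, I have $0\le m<r<m+1\le k-2$, so part (iii) of Theorem \ref{thm_main} applies; reading the inertia triple shows that $A^{\circ r}$ has exactly $n-k$ zero eigenvalues, whence $\rk(A^{\circ r})=k=\Hrk(A)$.

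The only delicate step is the case split in the second part: one must verify that the excluded integer set $\{0,1,\ldots,n-2\}$ removes precisely those integer values $r=0,1,\ldots,k-2$ at which Theorem \ref{thm_main}(ii) would produce the strictly smaller rank $r+1<k$, and that every remaining positive $r$ is covered either by part (i) or by part (iii). Both checks are routine once one notes $k\le n$, so I do not expect a genuine obstacle beyond this bookkeeping.
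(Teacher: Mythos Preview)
Your argument is correct and is exactly the derivation the paper has in mind: the corollary is stated there without proof, as an immediate consequence of Theorem~\ref{thm_nonsing} and Theorem~\ref{thm_main}, and your upper bound $\rk(A^{\circ r})\le \Hrk(A^{\circ r})\le \Hrk(A)$ together with the case split between parts (i) and (iii) of Theorem~\ref{thm_main} is precisely how one fills in the details.
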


The last statement of Corollary \ref{thm_hrk} is not true if we drop the condition that $A$ has rank two.
For example, let $A$ be the $4\times 4$ positive definite matrix
$$A=\begin{bmatrix}1 & 1 & 1 & 1\\
1 & 8 & 27 & 64\\
1 & 27 & 125 & 343\\
1 & 64 & 343 & 1000\end{bmatrix}.$$
Then $\rk(A^{\circ (1/3)})=2$ and $\rk(A^{\circ (2/3)})=3.$

Let $x_1,\ldots,x_n$ be real numbers such that $1+x_ix_j>0$ for all $i,j=1,\ldots,n.$
We will first prove Theorem \ref{thm_main} for a special class of rank two, positive semidefinite, entrywise positive matrices of the form
\begin{equation}
X=\begin{bmatrix}1+x_ix_j\end{bmatrix},\label{eq11}
\end{equation} and then obtain the theorem as a consequence.
Observe that the $j\textrm{th}$ and the $l\textrm{th}$ columns of $X$ are linearly dependent if and only if $x_j=x_l.$
Hence the Hadamard power rank of $X$ is $k$ if and only if exactly $k$ of $x_1,\ldots,x_n$ are distinct.
In particular, $\Hrk(X)=n$ if and only if all $x_1,\ldots,x_n$ are distinct.

The next two results use ideas similar to those in \cite{bj} and \cite{j}
for the matrix of the form $\begin{bmatrix}p_i+q_j\end{bmatrix}$ for distinct positive numbers $p_1,\ldots,p_n$ and $q_1,\ldots,q_n.$
Let $(a_1,\ldots,a_n)$ be an $n$-tuple of nonzero real numbers.
The number of sign changes in $(a_1,\ldots,a_n)$ is the number of indices $m$ for which $a_{m-1}a_m<0,$ $1<m\le n.$
The number of sign changes in any real tuple is the number of sign changes in the tuple obtained by deleting the zero terms and keeping the remaining terms unaltered with their order preserved.
The zero tuple has zero number of sign changes.
See \cite[Part V]{gg}.
\begin{prop}\label{prop1}
Let $x_1<\cdots< x_n$ be distinct real numbers.
Let
$$R_1=\begin{cases}
-\frac{1}{x_n} & x_n>0\\
-\infty & otherwise,
\end{cases}$$
and
$$R_2=\begin{cases}
-\frac{1}{x_1} & x_1<0\\
\infty & otherwise.
\end{cases}$$
For each real number $r,$ and
for every nonzero tuple $(c_1,\ldots,c_n),$
the function
$f_r:(R_1,R_2)\to\mathbb{R}$ defined as
\begin{equation}
f_r(x)=\sum\limits_{j=1}^{n}c_j(1+xx_j)^r.\label{eqprop11}
\end{equation}
has at most $n-1$ zeros counting multiplicities.
(Here we use the convention that the number of zeros of the identically zero function is zero.)
\end{prop}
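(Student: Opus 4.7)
The plan is to prove the result by induction on $n$, using Rolle's theorem to reduce the $n$-point problem at parameter $r$ to the $(n-1)$-point problem at parameter $r-1$. The base case $n=1$ is immediate: since $1+xx_1>0$ on $(R_1,R_2)$, the function $f_r(x)=c_1(1+xx_1)^r$ with $c_1\ne 0$ is nowhere zero. The case $r=0$ is also trivial for any $n$, since $f_0\equiv\sum_j c_j$ is a constant and hence has either no zeros or, if the constant vanishes, zero by convention; in particular the count is at most $n-1$ whenever $n\ge 1$.

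For the inductive step, assume the proposition for all $(n-1)$-point configurations and all real parameters, and fix $n\ge 2$, $r\ne 0$, and a nonzero tuple $(c_1,\ldots,c_n)$. We may assume some $c_j$ with $j\ge 2$ is nonzero, since otherwise $f_r(x)=c_1(1+xx_1)^r$ has no zeros. Set $g(x)=(1+xx_1)^{-r}f_r(x)$. Since $(1+xx_1)^r$ is real analytic and nonvanishing on $(R_1,R_2)$, the functions $f_r$ and $g$ share the same zero set with identical multiplicities. A direct differentiation gives
\[
g'(x)\;=\;\frac{r}{(1+xx_1)^{r+1}}\,\tilde f_{r-1}(x),\qquad \tilde f_{r-1}(x):=\sum_{j=2}^{n}c_j(x_j-x_1)(1+xx_j)^{r-1}.
\]
If $f_r$ has at least $n$ zeros on $(R_1,R_2)$ counting multiplicities, then so does $g$, and Rolle's theorem yields at least $n-1$ zeros of $g'$, and hence of $\tilde f_{r-1}$, on $(R_1,R_2)$. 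The tuple $(c_j(x_j-x_1))_{j=2}^n$ is nonzero because $x_j\ne x_1$ for $j\ge 2$ and at least one such $c_j$ was chosen nonzero. The interval $(R_1',R_2')$ associated with $x_2<\cdots<x_n$ satisfies $R_1'=R_1$ and $R_2'\ge R_2$, so it contains $(R_1,R_2)$. Applying the inductive hypothesis to $\tilde f_{r-1}$ on $(R_1',R_2')$ bounds its zeros by $n-2$, contradicting the $n-1$ obtained from Rolle.

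The main subtlety is the bookkeeping of zero multiplicities in the real analytic (rather than polynomial) setting, together with the degenerate cases in which $f_r$ or $\tilde f_{r-1}$ vanishes identically. Both are handled by the observation that $f_r$ is real analytic on $(R_1,R_2)$, so either $f_r\equiv 0$ (in which case the count is $0$ by convention and there is nothing to prove) or the zeros of $f_r$ are isolated with multiplicities equal to the usual orders of vanishing of its Taylor expansion; the same dichotomy applies to $g$ and to $\tilde f_{r-1}$. With these observations in place, Rolle's theorem with multiplicities applies in its standard form and the induction closes.
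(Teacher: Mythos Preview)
Your proof is correct, but it follows a different route from the paper's. The paper proves the stronger Descartes-type bound $Z(f_r)\le V(c_1,\ldots,c_n)$ by induction on the number of sign changes: it multiplies $f_r$ by $(x+u)^{-r}$ for a carefully chosen $u>0$ and differentiates, obtaining a function of the same form (still $n$ terms, parameter $r-1$) whose coefficient tuple has one fewer sign change. You instead induct on $n$: multiplying by $(1+xx_1)^{-r}$ and differentiating kills the $j=1$ term outright, reducing to an $(n-1)$-term sum at parameter $r-1$, and the interval only grows, so the inductive hypothesis applies. Your argument is the classical ``extended Chebyshev system'' reduction and is arguably cleaner for the stated bound $n-1$; the paper's argument is slightly more intricate (one must manufacture $u$ and verify $x+u\ne 0$ on the interval) but yields the sharper sign-change estimate, which is in the spirit of the sign-regularity tools used elsewhere in the paper.
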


\begin{proof}
Let $Z(g)$ denote the number of zeros of a function $g,$
and let $V(c_1,\ldots,c_n)$ denote the number of sign changes in the tuple $(c_1,\ldots,c_n).$
%We follow the conventions used in \cite[Part V]{gg}.
We prove that for every real number $r$
and for every nonzero tuple $(c_1,\ldots,c_n),$
\begin{equation}
Z(f_r)\le V(c_1,\ldots,c_n).\label{eqprop12}
\end{equation}
We prove this by induction on $V(c_1,\ldots,c_n).$

Clearly if $V(c_1,\ldots,c_n)=0,$	then $Z(f_r)=0.$
Assume that \eqref{eqprop12} holds for every real $r$
and for all nonzero tuples $(c_1,\ldots,c_n)$ with $V(c_1,\ldots,c_n)$\\
$\le k-1.$
Now let $V(c_1,\ldots,c_n)=k>0.$
Without loss of generality, we can assume that each $c_i\ne 0.$
Choose $1<j\le n,$ such that
$c_{j-1}c_j<0.$
By replacing each $x_i$ by $-x_i$ if necessary,
we can assume that $x_j>0.$
So, we can find a $u>0$ such that $1-x_ju<0$ and $1-x_{j-1}u>0.$
Consider the function $\varphi$ on $(R_1,R_2)$ defined as
$$\varphi(x)=\sum\limits_{i=1}^{n}c_i(1-x_iu)(1+xx_i)^{r-1}.$$
We have $V(c_1(1-ux_1),\ldots, c_n(1-ux_n))=k-1.$
Hence by the induction hypotheses,
$Z(\varphi)\le k-1.$
For all $x$ in $(R_1,R_2),$
we see that
\begin{eqnarray*}
\varphi(x) &=& \sum\limits_{i=1}^{n}c_i(1+xx_i-x_i(x+u))(1+xx_i)^{r-1}\\
&=&\sum\limits_{i=1}^{n}c_i(1+xx_i)^r-\sum\limits_{i=1}^{n}c_ix_i(x+u)(1+xx_i)^{r-1}\\
&=&-\frac{(x+u)^{r+1}}{r}\left(\frac{-r}{(x+u)^{r+1}}f_r(x)+\frac{1}{(x+u)^r}f_r^\prime(x)\right)\\
&=& -\frac{(x+u)^{r+1}}{r}h^\prime(x),
\end{eqnarray*}
where $h(x)=(x+u)^{-r}f_r(x).$
Since $x_j>0,$ we can see that $u>0$ and $R_1+u=-1/x_n+u>-1/x_j+u>0.$
Hence $x+u\ne 0$ for all $x\in (R_1,R_2).$
So, $h$ is a well defined function on $(R_1,R_2)$ and $Z(h)=Z(f_r).$
By Rolle's theorem, $Z(h)\le Z(h^\prime)+1.$
Thus we have $Z(f_r)=Z(h)\le Z(h^\prime)+1=Z(\varphi)+1\le k.$
\end{proof}

\begin{cor}\label{cor2}
Let $x_1<\cdots<x_n$ be distinct real numbers.
Suppose $y_1,\ldots,y_n$ are distinct real numbers such that the $n\times n$ matrix 
\begin{equation}
S=\begin{bmatrix}1+y_ix_j\end{bmatrix}\label{eq42}
\end{equation}
has all its entries positive.
Let $r$ be a real number.
Then $S^{\circ r}$ is nonsingular if $r\ne 0,1,\ldots,n-2.$
If $r=0,1,\ldots,n-2,$ then $S^{\circ r}$ has rank $r+1.$
\end{cor}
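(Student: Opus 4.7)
The plan is to apply Proposition~\ref{prop1} to the function
\[
f_r(x) = \sum_{j=1}^{n} c_j(1+xx_j)^r
\]
built from any vector $c=(c_1,\ldots,c_n)^T$ in the null space of $S^{\circ r}$. The $i$th coordinate of $S^{\circ r}c$ is exactly $f_r(y_i)$, so such a $c$ yields an $f_r$ vanishing at the $n$ distinct points $y_1,\ldots,y_n$. Because $S$ has positive entries, $1+y_ix_j>0$ for all $i,j$, so each $y_i$ lies in the interval $(R_1,R_2)$ on which $f_r$ is defined. Proposition~\ref{prop1} then forces $f_r\equiv 0$ whenever $c\ne 0$.

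If $r$ is not a nonnegative integer, then $r(r-1)\cdots(r-k+1)\ne 0$ for every $k\ge 0$. Differentiating $f_r\equiv 0$ term by term $k$ times and evaluating at $x=0\in(R_1,R_2)$ yields
\[
\sum_{j=1}^{n}c_jx_j^k=0 \quad\text{for every } k\ge 0.
\]
Since $x_1,\ldots,x_n$ are distinct, the $k=0,1,\ldots,n-1$ subset of these equations is an invertible Vandermonde system, forcing $c=0$, a contradiction. Hence $S^{\circ r}$ is nonsingular.

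If $r=m$ is a nonnegative integer, the same derivative identity yields $\sum_j c_jx_j^k=0$ only for $k=0,1,\ldots,m$, because the higher-order derivative prefactors vanish. Conversely, expanding $(1+xx_j)^m$ by the binomial theorem shows these $m+1$ equations are also sufficient for $f_m\equiv 0$. Hence the null space of $S^{\circ m}$ coincides with the kernel of the $(m+1)\times n$ matrix $[x_j^k]_{0\le k\le m,\,1\le j\le n}$. For $0\le m\le n-2$ the distinctness of the $x_j$'s makes this matrix have full row rank $m+1$, so its kernel has dimension $n-m-1$, giving $\rk(S^{\circ m})=m+1$ as required.

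The main obstacle is deducing $c=0$ from $f_r\equiv 0$ when $r$ is not a nonnegative integer; the derivative-at-zero computation handles it cleanly, using that $f_r$ is smooth on $(R_1,R_2)$ and that $0$ is always in this interval. Everything else follows from Proposition~\ref{prop1} together with the standard nonsingularity of Vandermonde matrices built from distinct nodes.
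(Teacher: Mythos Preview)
Your argument follows the paper's approach closely: use Proposition~\ref{prop1} to force $f_r\equiv 0$ from the $n$ distinct zeros $y_1,\ldots,y_n$, then differentiate at $x=0$ to obtain a Vandermonde system in the $c_j$'s. The only substantive difference is in handling the integers $r=m\in\{0,1,\ldots,n-2\}$: the paper writes down the explicit factorisation $S^{\circ m}=W_y^{T}DW_x$ with $(m+1)\times n$ rectangular Vandermonde factors $W_x,W_y$ and a positive diagonal $D$ of binomial coefficients, reading off $\rk S^{\circ m}=m+1$ directly; you instead identify $\ker S^{\circ m}$ with the kernel of the $(m+1)\times n$ Vandermonde matrix $[x_j^{k}]$ via the equivalence $f_m\equiv 0 \Leftrightarrow \sum_j c_j x_j^{k}=0$ for $0\le k\le m$. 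Both routes are short and give the same conclusion.

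One small gap in your write-up: you split the cases as ``$r$ not a nonnegative integer'' versus ``$r=m$ a nonnegative integer'', and in the latter you only state the rank conclusion for $0\le m\le n-2$. The nonsingularity claim in the corollary also covers integers $m\ge n-1$, which you never explicitly address. It does follow immediately from your own null-space description---for $m\ge n-1$ the $(m+1)\times n$ Vandermonde matrix has full column rank $n$, so its kernel is trivial---but you should say so. (The paper avoids this by making the case split at $r\notin\{0,\ldots,n-2\}$ and noting that the derivative prefactors $r(r-1)\cdots(r-k+1)$ are nonzero for $0\le k\le n-1$ in that entire range.)
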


\begin{proof}
Let $r\ne 0,1,\ldots,n-2.$
The matrix $S^{\circ r}$ is singular if and only if there exists a nonzero tuple $(c_1,\ldots,c_n)$ satisfying
$$f_r(y_i)=\sum\limits_{j=1}^{n}c_j(1+y_ix_j)^r=0\textrm{  for all  }i=1,\ldots,n.$$
Since each $1+y_ix_j>0,$ each $y_i\in (R_1,R_2).$
By Proposition \ref{prop1} this is possible only when $f_r$ is identically zero.
The function $f_r$ is identically zero if and only if $f_r^{(k)}(y)=0$ for all nonnegative integers $k$ and all $y\in (R_1,R_2).$
%But this is possible only when $r=0,1,\ldots,n-2.$
For $r\ne 0,1,\ldots,n-2,$
 $f_r^{(k)}(0)=0,$ $0\le k\le n-1$ implies
\begin{equation}
\begin{bmatrix}1 & 1 & \cdots & 1\\
x_1 & x_2 & \cdots & x_n\\
\vdots & \vdots & \vdots\vdots\vdots & \vdots\\
x_1^{n-1} & x_2^{n-1} & \cdots & x_n^{n-1}\end{bmatrix}\begin{bmatrix}c_1\\
c_2\\
\vdots\\
c_n\end{bmatrix}=\begin{bmatrix}0\\
0\\
\vdots\\
0\end{bmatrix}.\label{eqrr1}
\end{equation}
Since the $n\times n$ matrix on the left hand side of \eqref{eqrr1} is an invertible Vandermonde matrix,
we get $c_1=\cdots=c_n=0.$
But this is a contradiction to the choice of $(c_1,\ldots,c_n)$ to be nonzero.
This gives that the function $f_r$ is not identically zero if $r\ne 0,1,\ldots,n-2.$
Hence $S^{\circ r}$ must be nonsingular.

If $r=0,1,\ldots,n-2,$
then $$S^{\circ r}=W_y^TDW_x,$$
  where $D$ is the $(r+1)\times (r+1)$ positive diagonal matrix with $j\textrm{th}$ diagonal entry $\binom{r}{j-1},$
	and $W_x$ is the $(r+1)\times n$ rectangular Vandermonde matrix
	$$W_x=\begin{bmatrix}1 & 1 & \cdots & 1\\
	x_1 & x_2 & \cdots & x_n\\
	\vdots & \vdots & \vdots\vdots\vdots & \vdots\\
	x_1^r & x_2^r & \cdots & x_n^r\end{bmatrix}.$$
	The matrix $W_y$ is the matrix $W_x$ with $x_i$'s replaced by $y_i$'s.
	From this factorisation it is clear that $S^{\circ r}$ has rank $r+1.$
	\end{proof}
	
	\begin{thm}\label{thm3}
	Let $x_1<\cdots<x_n$ be distinct real numbers
	such that all the entries of the $n\times n$ matrix $X$ defined by \eqref{eq11} are positive.
	Let $r$ be a nonnegative real number.
\begin{itemize}
\item[$(i)$] $X^{\circ r}$ is positive definite for every $r>n-2,$ and therefore $\In\, X^{\circ r}=(n,0,0).$
\item[$(ii)$] If $r=0,1,\ldots,n-2,$ then $X^{\circ r}$ is positive semidefinite with rank $r+1,$ and therefore $\In\, X^{\circ r}=(r+1,n-(r+1),0).$
\item[$(iii)$] Let $0\le m<r<m+1\le n-2$ for some integer $m.$ Then 
\begin{equation*}
\In\, X^{\circ r}=\bigl(\left\lfloor\frac{n+m+2}{2}\right\rfloor, 0, \left\lceil \frac{n-m-2}{2}\right\rceil\bigr).
\end{equation*}
\item[$(iv)$] Every nonzero eigenvalue of $X^{\circ r}$ is simple.
\end{itemize}
\end{thm}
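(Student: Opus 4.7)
The four claims are handled separately, with part (iii) carrying the main content.

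Parts (i) and (ii) are quick consequences of results already in hand. For (i), Corollary \ref{cor2} (applied with $y_i=x_i$) gives that $X^{\circ r}$ is nonsingular for $r>n-2$, while the FitzGerald--Horn theorem recalled in the introduction gives $X^{\circ r}\geq 0$ for the same range since $X$ is $n\times n$ doubly nonnegative; together these yield positive definiteness. For (ii), the Schur product theorem gives $X^{\circ r}\geq 0$ for every nonnegative integer $r$, and Corollary \ref{cor2} pins the rank at $r+1$.

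The substantive part is (iii). By Corollary \ref{cor2}, $X^{\circ r}$ is nonsingular for every $r\in(m,m+1)$ and every admissible tuple (distinct $x_i$ with $1+x_ix_j>0$), so its inertia is locally constant under continuous deformation of the $x_i$. Moreover, the admissible ordered set $\{x_1<\cdots<x_n : 1+x_ix_j>0 \text{ for all } i,j\}$ is path-connected: the scaling $(x_1,\ldots,x_n)\mapsto (tx_1,\ldots,tx_n)$, $t\in(0,1]$, preserves both the ordering and the conditions $1+t^2 x_i x_j>0$ and contracts any configuration into an arbitrarily small neighborhood of the origin, where the set is convex. It therefore suffices to compute the inertia at $x_i=\epsilon y_i$ for fixed $y_1<\cdots<y_n$ and $\epsilon>0$ small. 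For such $\epsilon$, the binomial series
\[
(1+\epsilon^2 y_iy_j)^r=\sum_{k=0}^\infty\binom{r}{k}\epsilon^{2k}(y_iy_j)^k
\]
converges absolutely. Let $v_k=(y_1^k,\ldots,y_n^k)^T$, let $V$ be the $n\times n$ Vandermonde matrix with columns $v_0,\ldots,v_{n-1}$, and write $v_k=Vc_k$ for $k\geq n$ with bounded $c_k$; then
\[
X^{\circ r}=V\Bigl(\diag\bigl(\tbinom{r}{k}\epsilon^{2k}\bigr)_{k=0}^{n-1}+R(\epsilon)\Bigr)V^T, \qquad \|R(\epsilon)\|=O(\epsilon^{2n}).
\]
By Sylvester's law of inertia, $\In\, X^{\circ r}$ equals the inertia of the bracketed matrix. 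Since the smallest diagonal entry has absolute value of order $\epsilon^{2(n-1)}$ while $R(\epsilon)$ is of order $\epsilon^{2n}$, Weyl's inequality shows that for small $\epsilon$ this inertia coincides with that of the diagonal part alone, which is determined by the signs of $\binom{r}{k}$ for $0\leq k\leq n-1$. For $r\in(m,m+1)$ one has $\binom{r}{k}>0$ for $k\leq m+1$ and $\binom{r}{k}$ has sign $(-1)^{k-m-1}$ for $k\geq m+2$; counting yields exactly $\lfloor(n+m+2)/2\rfloor$ positives and $\lceil(n-m-2)/2\rceil$ negatives, matching the claim.

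Part (iv) is the most delicate and is the main obstacle I foresee. The small-$\epsilon$ analysis above already shows that at $x_i=\epsilon y_i$ all eigenvalues of $X^{\circ r}$ are distinct (they have different orders $\binom{r}{k}\epsilon^{2k}$ up to bounded factors), so simplicity holds at some admissible configurations; but since real symmetric eigenvalues can cross under deformation, simplicity does not automatically propagate to the whole admissible set. My plan is to suppose a nonzero eigenvalue $\lambda$ has two linearly independent eigenvectors $u,v$ and consider the functions $F_w(x)=\sum_j w_j(1+xx_j)^r$ for $w\in\mathrm{span}(u,v)$, which satisfy $F_w(x_i)=\lambda w_i$. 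Zero coordinates of $w$ produce zeros of $F_w$ at the corresponding $x_i$, and the objective is to choose $w$ annihilating enough coordinates to exceed the bound $n-1$ from Proposition \ref{prop1} and obtain a contradiction. The obstacle is that generically only one coordinate of $w$ can be zeroed at a time, so some additional structural input---apparently the positivity hypothesis $1+x_ix_j>0$---must be exploited to force the pairs $(u_i,v_i)$ into sufficiently degenerate alignments, as is already the case in the $n=3$, $r=1$ example.
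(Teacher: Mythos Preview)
Your treatment of (i)--(iii) is correct. For (iii) the paper takes a shorter but less self-contained route: it cites Theorem~2.8 of \cite{j} for the case of positive $x_i$ (in particular for $x_i=i$), and then connects an arbitrary admissible tuple to $(1,2,\ldots,n)$ by the linear path $x_i(t)=(1-t)i+tx_i$, invoking Corollary~\ref{cor2} to keep $X(t)^{\circ r}$ nonsingular along the way. Your scaling homotopy $x_i\mapsto tx_i$ transparently preserves both the ordering and the constraint $1+x_ix_j>0$, and your binomial-series computation at the small-$\epsilon$ endpoint replaces the external citation by a direct calculation. One small correction: the coefficients $c_k=V^{-1}v_k$ are not bounded in $k$, since $\|v_k\|$ grows like $(\max_i|y_i|)^k$; nevertheless $\|R(\epsilon)\|=O(\epsilon^{2n})$ does hold once $\epsilon\max_i|y_i|<1$, which is all you need.

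For (iv) there is a genuine gap, and you have correctly located it. The missing ingredient is \emph{sign regularity}: the paper says only that (iv) follows from the sign regularity of $X^{\circ r}$, exactly as in Theorem~2.8(iv) of \cite{j}. The point is that every $k\times k$ minor of $X^{\circ r}$ is itself of the form $\det[(1+x_{i_a}x_{j_b})^r]$ on ordered sub-tuples, hence nonzero by (the $k\times k$ version of) Corollary~\ref{cor2} whenever $r\notin\{0,\ldots,k-2\}$; connectedness of the configuration space then forces all $k\times k$ minors to share a common sign, so $X^{\circ r}$ is strictly sign regular. The classical Gantmacher--Krein oscillation theory for sign-regular matrices (the same circle of ideas as Perron--Frobenius for positive matrices) then yields that all nonzero eigenvalues are simple. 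Your proposed eigenvector argument cannot close on its own: zeroing one coordinate of $w$ in a two-dimensional eigenspace forces only a single guaranteed zero of $F_w$, nowhere near the $n$ needed to contradict Proposition~\ref{prop1}. The sign-regularity machinery is precisely what converts the zero-count of Proposition~\ref{prop1} into the eigenvalue simplicity statement.
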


\begin{proof}
If the $x_i$'s are all positive, the result is the same as Theorem 2.8 of \cite{j}.
So let $x_i$ be distinct real numbers such that $X$ has all entries positive.
Part $(ii)$ follows from the Schur product theorem and Corollary \ref{cor2}.
For $0\le t\le 1,$ consider the numbers
$$x_i(t)=(1-t)i+tx_i,\ \ 1\le i\le n.$$
Let $X(t)$ be the matrix given by \eqref{eq11} corresponding to the numbers $x_1(t)<\cdots<x_n(t).$
Clearly $X(0)=\begin{bmatrix}1+ij\end{bmatrix}$ and $X(1)=X.$
Since $(i)$ and $(iii)$ hold for $X(0),$
we can show that $(i)$ and $(iii)$ also hold for $X$ by using the nonsingularity of $X(t)$ and the continuity of their eigenvalues.
Part $(iv)$ follows from the sign regularity of $X^{\circ r}.$
The proof is exactly the same as that of Theorem 2.8$(iv)$ of \cite{j}.
\end{proof}

Let $x_1<\cdots<x_k$ be $k$ distinct real numbers and let $n_1,\ldots,n_k$ be positive integers such that $n_1+\cdots+n_k=n.$
Let $X$ be the $n\times n$ matrix given by \eqref{eq11} corresponding to the numbers
${\underset{n_1\, \textrm{times}}{\underbrace{x_1,\ldots,x_1}}},\ldots,{\underset{n_k\, \textrm{times}}{\underbrace{x_k,\ldots,x_k}}}.$
It is easy to see that $\Hrk(X)=k.$
By using Theorem \ref{thm3} and the Cauchy interlacing principle \cite[Ch.III]{rbh},
we can prove the following theorem.

\begin{thm}\label{thm4}
Let $x_1<\cdots<x_k$ be $k$ distinct real numbers, and let $X$ be the $n\times n$ matrix defined in the preceding paragraph.
Suppose that all the entries of $X$ are positive. Let $r$ be a nonnegative real number.
Then $(i)$-$(iv)$ of Theorem \ref{thm_main} hold true for $X.$
\end{thm}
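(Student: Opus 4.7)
The plan is to reduce Theorem~\ref{thm4} to Theorem~\ref{thm3} applied to the auxiliary $k\times k$ matrix $Y:=[1+x_ix_j]_{i,j=1}^{k}$. After a row-and-column permutation grouping equal $x$-values together, $X^{\circ r}$ becomes a $k\times k$ block matrix whose $(i,j)$-block is the constant rectangle $(1+x_ix_j)^{r}J_{n_i\times n_j}$; in particular $Y^{\circ r}$ sits inside $X^{\circ r}$ as the principal submatrix obtained by keeping one row and one column per block. Since $Y$ has positive entries and defining numbers $x_1<\cdots<x_k$ that are all distinct, Theorem~\ref{thm3} supplies the inertia and the simplicity of the nonzero eigenvalues of $Y^{\circ r}$ in each of the three regimes for $r$.

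The first key observation is the rank identity $\rk(X^{\circ r})=\rk(Y^{\circ r})$, together with an explicit description of the nonzero spectrum of $X^{\circ r}$. Every column of $X^{\circ r}$ lies in the $k$-dimensional block-constant subspace $V$ spanned by the orthonormal vectors $e_i:=n_i^{-1/2}(0,\ldots,0,1,\ldots,1,0,\ldots,0)^T$ having ones exactly in block $i$. A direct calculation gives the restriction of $X^{\circ r}$ to $V$, written in the basis $\{e_1,\ldots,e_k\}$, as the symmetric matrix $B:=D^{1/2}Y^{\circ r}D^{1/2}$ with $D:=\diag(n_1,\ldots,n_k)$. Since $X^{\circ r}$ annihilates $V^{\perp}$ by symmetry, one obtains $\rk(X^{\circ r})=\rk(B)=\rk(Y^{\circ r})$, and the nonzero eigenvalues of $X^{\circ r}$ are exactly the eigenvalues of $B$.

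Parts (i)--(iii) now follow by combining this with the Cauchy interlacing principle. The principal-submatrix inequalities $\lambda_i(X^{\circ r})\ge\mu_i(Y^{\circ r})\ge\lambda_{n-k+i}(X^{\circ r})$ force $n_{\pm}(X^{\circ r})\ge n_{\pm}(Y^{\circ r})$, while the rank identity yields $n_{+}(X^{\circ r})+n_{-}(X^{\circ r})=\rk(Y^{\circ r})=n_{+}(Y^{\circ r})+n_{-}(Y^{\circ r})$; together these force equality, so $\In X^{\circ r}=(n_{+}(Y^{\circ r}),\,n-\rk(Y^{\circ r}),\,n_{-}(Y^{\circ r}))$. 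Substituting the three inertia triples that Theorem~\ref{thm3} records for $Y^{\circ r}$, with $k$ replacing the ``$n$'' there, reproduces the triples stated in Theorem~\ref{thm_main}(i)--(iii).

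The delicate part is (iv). Cauchy interlacing alone is not sharp enough here, because the eigenvalues of $B$ are not those of $Y^{\circ r}$---only their signatures agree, via Sylvester applied to the congruence $B=D^{1/2}Y^{\circ r}D^{1/2}$. Following the sign-regularity route that underlies the proof of Theorem~\ref{thm3}(iv), I would use that the argument in \cite{j} actually shows the relevant minors of $Y^{\circ r}$ are strictly sign regular of a prescribed type. Since every minor of $B$ equals the corresponding minor of $Y^{\circ r}$ times a strictly positive product of $n_i^{1/2}$'s, $B$ inherits the same strict sign regularity, and the same conclusion as in Theorem~\ref{thm3}(iv) forces all eigenvalues of $B$ to be simple. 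As these coincide with the nonzero eigenvalues of $X^{\circ r}$, part (iv) follows; the transfer of the sign-regularity structure from $Y^{\circ r}$ to $B$ is where the main obstacle lies.
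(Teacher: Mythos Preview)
Your proposal is correct and follows the same skeleton the paper sketches---reduce to the $k\times k$ matrix $Y^{\circ r}$ and invoke Theorem~\ref{thm3} together with Cauchy interlacing---but you supply considerably more detail than the paper's one-line indication. In particular, your identification of the nonzero spectrum of $X^{\circ r}$ with the spectrum of $B=D^{1/2}Y^{\circ r}D^{1/2}$ on the block-constant subspace $V$ is the right extra ingredient. Two remarks are worth making.

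First, once you have that identification, Cauchy interlacing is actually superfluous for (i)--(iii): Sylvester's law of inertia applied to the congruence $B=D^{1/2}Y^{\circ r}D^{1/2}$ already gives $n_{\pm}(X^{\circ r})=n_{\pm}(B)=n_{\pm}(Y^{\circ r})$ directly, and then Theorem~\ref{thm3} for $Y$ finishes. Your route via interlacing plus the rank identity is of course valid and matches the paper's stated tool, but it is the longer of the two paths given what you have already set up.

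Second, you are right that Cauchy interlacing alone cannot deliver (iv), so the paper's one-liner is incomplete on this point and your $B$-computation is genuinely needed. However, the transfer of sign regularity that you flag as ``the main obstacle'' is in fact routine: for any index sets $I,J$ one has
\[
\det B[I,J]=\Bigl(\prod_{i\in I}n_i^{1/2}\Bigr)\Bigl(\prod_{j\in J}n_j^{1/2}\Bigr)\det Y^{\circ r}[I,J],
\]
so every minor of $B$ has the same sign as the corresponding minor of $Y^{\circ r}$, and the argument from \cite{j} that gives Theorem~\ref{thm3}(iv) applies verbatim to $B$. You may therefore assert (iv) with full confidence rather than the tentative tone of your last sentence.
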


{\noindent {\it{\bf{Proof of Theorem \ref{thm_main}}}}:
Let $A$ be a rank two, positive semidefinite matrix with all its entries positive and Hadamard power rank $k.$
By Perron's theorem, we can find a vector $p=(p_1,\ldots,p_n)$ in $\mathbb{R}^n$ with all its components positive
and a vector $q=(q_1,\ldots,q_n)$ in $\mathbb{R}^n$ such that $A=\begin{bmatrix}p_ip_j+q_iq_j\end{bmatrix}.$
Let $x_i=q_i/p_i,$ $1\le i\le n,$
and take $X$ to be the matrix
$$X=\begin{bmatrix}1+x_ix_j\end{bmatrix}.$$
Clearly $A=DX D,$
where $D$ is the diagonal matrix with diagonal entries $p_1,\ldots,p_n.$
Hence the inertia of $A^{\circ r}$ is the same as that of $X^{\circ r}.$
It is easy to see that $(p_i,q_i)$ is a scalar multiple of some $(p_j,q_j)$ if and only if $x_i=x_j.$
Hence the number of distinct $x_i$'s is equal to the Hadamard power rank of $A.$
Finally, we get Theorem \ref{thm_main} from Theorem \ref{thm4}.
\qed

\section{Monotonicity}

We first study monotonicity of the Hadamard power functions for the matrices of the form \eqref{eq11}, and
then use it to prove the main theorem of this section.

Let $x_1,\ldots,x_n$ be distinct nonzero real numbers and let $X$ be
the corresponding $n\times n$ matrix given by \eqref{eq11}.
Take $x_{n+1}=0,$ and let $X_0$ be the $(n+1)\times (n+1)$ matrix given by \eqref{eq11} corresponding to the $n+1$ distinct numbers $x_1,\ldots,x_n,x_{n+1},$ i.e.,
\begin{equation}
X_0=\begin{bmatrix}1+x_1^2 & \cdots & 1+x_1x_n & 1\\
1+x_2x_1 & \cdots & 1+x_2x_n & 1\\
\vdots & \vdots\vdots\vdots & \vdots & \vdots\\
1 & \cdots & 1 & 1\end{bmatrix}.\label{eq31}
\end{equation}
\begin{thm}\label{thm31}
Let $x_1,\ldots,x_n$ be distinct nonzero real numbers,
and let $X$ be the corresponding $n\times n$ matrix given by \eqref{eq11}.
Then $X^{\circ r}\ge E$ if and only if either $r> n-1$ or $r=0,1,\ldots,n-1.$
\end{thm}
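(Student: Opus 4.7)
The plan is to exploit the augmented matrix $X_0$ defined in \eqref{eq31} via a Schur complement identity, thereby reducing the statement to Theorem \ref{thm3} applied to $X_0.$ The critical observation is that $(1+x_i\cdot 0)^r=1$ for every $r,$ so $X_0^{\circ r}$ admits the block form
\begin{equation*}
X_0^{\circ r}=\begin{bmatrix}X^{\circ r} & \mathbf{1}\\ \mathbf{1}^T & 1\end{bmatrix},
\end{equation*}
where $\mathbf{1}$ denotes the $n$-vector of all ones. The Schur complement of the invertible bottom-right $1\times 1$ block is precisely $X^{\circ r}-\mathbf{1}\mathbf{1}^T=X^{\circ r}-E,$ so by Haynsworth's inertia additivity formula,
\begin{equation*}
\In\, X_0^{\circ r}=(1,0,0)+\In\,(X^{\circ r}-E).
\end{equation*}
Consequently $X^{\circ r}\ge E$ is equivalent to $X_0^{\circ r}$ being positive semidefinite.

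Next I would apply Theorem \ref{thm3} to the $(n+1)\times(n+1)$ matrix $X_0,$ which is of the form \eqref{eq11} corresponding to the $n+1$ distinct real numbers $x_1,\ldots,x_n,0$ (distinctness is guaranteed by the hypothesis that the $x_i$ are distinct and nonzero) and has all positive entries because $X$ does and $1+x_i\cdot 0=1>0.$ Reordering these parameters in increasing order merely permutes rows and columns of $X_0$ and preserves the inertia. Reading Theorem \ref{thm3} with $n$ replaced by $n+1,$ part $(i)$ yields positive definiteness of $X_0^{\circ r}$ whenever $r>n-1;$ part $(ii)$ yields positive semidefiniteness for $r=0,1,\ldots,n-1;$ and part $(iii)$ shows that whenever $r$ is a non-integer with $0\le m<r<m+1\le n-1,$ the matrix $X_0^{\circ r}$ has $\lceil (n-m-1)/2\rceil\ge 1$ negative eigenvalues, and hence fails to be positive semidefinite.

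Combining these three cases with the Schur complement equivalence yields $X^{\circ r}\ge E$ if and only if $r>n-1$ or $r=0,1,\ldots,n-1.$ The only non-routine step in the argument is recognising the Schur complement structure concealed in the definition of $X_0;$ once the block decomposition above is written down, the remainder is a short bookkeeping exercise riding on Theorem \ref{thm3} and demands no fresh analytic input. I therefore expect no substantive obstacle beyond a careful verification that the augmented parameters $x_1,\ldots,x_n,0$ still satisfy the hypotheses of Theorem \ref{thm3}.
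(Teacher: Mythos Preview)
Your proof is correct and follows essentially the same route as the paper: both recognise that $X^{\circ r}-E$ is the Schur complement of the $(n+1,n+1)$ entry in $X_0^{\circ r}$ and then invoke Theorem~\ref{thm3} for the $(n+1)\times(n+1)$ matrix $X_0$. Your use of Haynsworth's inertia additivity to obtain the direct equivalence $X_0^{\circ r}\ge 0 \Leftrightarrow X^{\circ r}-E\ge 0$ is marginally cleaner than the paper's version, which instead appeals to the equivalence $X_0^{\circ r}\ge 0 \Leftrightarrow (X^{\circ r}\ge 0 \text{ and } X^{\circ r}-E\ge 0)$ and then uses $X^{\circ r}\ge X^{\circ r}-E$ to isolate which factor fails in the non-integer case.
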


\begin{proof}
Let $X_0$ be the $(n+1)\times (n+1)$ matrix given by \eqref{eq31} corresponding to $x_1,\ldots,x_n.$
Write $X_0^{\circ r}$ in the block form
$$X_0^{\circ r}=\begin{bmatrix}X^{\circ r} & e^T\\
e & \alpha\end{bmatrix},$$
where $e$ is the $n$-vector $(1,\ldots,1)$ and $\alpha=1.$
We see that $X^{\circ r}-E$ is the Schur complement of $\alpha$ in $X_0^{\circ r}.$
By Problem 4.5.P21 of \cite{hj}, $X_0^{\circ r}$ is positive semidefinite if and only if both $X^{\circ r}$ and $X^{\circ r}-E$ are positive semidefinite.
Since $x_i$'s are all distinct and nonzero,
by Theorem \ref{thm3}, we know that $X_0^{\circ r}$ is positive semidefinite if and only if $r>n-1$ or $r=0,1,\ldots,n-1.$
This implies that $X^{\circ r}-E$ is positive semidefinite if $r>n-1$ or $r=0,1,\ldots,n-1.$
If $r$ is not an integer and $0<r<n-1,$ then
at least one of $X^{\circ r}$ or $X^{\circ r}-E$ is not positive semidefinite.
But $X^{\circ r}\ge X^{\circ r}-E.$
Hence we obtain $X^{\circ r}-E$ is not positive semidefinite if $r$ is not an integer and $0<r<n-1.$
\end{proof}

\begin{thm}\label{thm32}
Let $x_1,\ldots,x_k$ be distinct nonzero numbers,
let $n_1,\ldots,n_k$ be positive integers, and let $n_{k+1}$ be a nonnegative integer with $n_1+\cdots+n_k+n_{k+1}=n.$
Let $X$ be the $n\times n$ matrix given by \eqref{eq11} corresponding to the $n$ numbers
${\underset{n_1\, \textrm{times}}{\underbrace{x_1,\ldots,x_1}}},\ldots,{\underset{n_k\, \textrm{times}}{\underbrace{x_k,\ldots,x_k}}},{\underset{n_{k+1}\, \textrm{times}}{\underbrace{0,\ldots,0}}}.$
If $X$ is entrywise positive, then	 $X^{\circ r}\ge E$ if and only if either $r>k-1$ or $r=0,1,\ldots,k-1.$
\end{thm}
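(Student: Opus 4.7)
The plan is to mimic the Schur-complement argument of Theorem \ref{thm31} but replace the role of Theorem \ref{thm3} by its multiplicity-aware analog Theorem \ref{thm4}. Concretely, I would augment the list of parameters with one extra zero: put
\[
X_0=\begin{bmatrix} X & e^T\\ e & 1\end{bmatrix},
\]
so that $X_0$ is the matrix of the form \eqref{eq11} corresponding to the parameters
${\underbrace{x_1,\ldots,x_1}_{n_1}},\ldots,{\underbrace{x_k,\ldots,x_k}_{n_k}},{\underbrace{0,\ldots,0}_{n_{k+1}+1}}$,
and $e=(1,\ldots,1)$. Then $X_0^{\circ r}$ has the same block form with $X^{\circ r}$ in the upper left block, and $X^{\circ r}-E$ is precisely the Schur complement of the positive scalar $1$ in $X_0^{\circ r}$.

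First I would compute $\Hrk(X_0)$. Because each $x_i\neq 0$ and the $x_i$'s are pairwise distinct, adjoining a zero to the parameter list produces exactly $k+1$ distinct values, so $\Hrk(X_0)=k+1$. Theorem \ref{thm4} then tells us precisely when $X_0^{\circ r}$ is positive semidefinite: if and only if $r>k-1$ or $r\in\{0,1,\ldots,k-1\}$. Moreover, because the $(n+1,n+1)$-entry of $X_0^{\circ r}$ equals $1>0$, the standard Schur-complement characterization (Problem 4.5.P21 of \cite{hj}) gives
\[
X_0^{\circ r}\ \text{is PSD}\ \Longleftrightarrow\ X^{\circ r}\ \text{is PSD and}\ X^{\circ r}-E\ \text{is PSD}.
\]

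For sufficiency, if $r>k-1$ or $r\in\{0,1,\ldots,k-1\}$, then $X_0^{\circ r}$ is positive semidefinite by Theorem \ref{thm4}, and so $X^{\circ r}-E$ is positive semidefinite, i.e., $X^{\circ r}\ge E$. For necessity, suppose $r$ is noninteger with $0<r<k-1$, and suppose toward a contradiction that $X^{\circ r}\ge E$. Then $X^{\circ r}-E$ is positive semidefinite, and since $E$ is positive semidefinite, so is $X^{\circ r}=(X^{\circ r}-E)+E$. By the Schur-complement equivalence above, $X_0^{\circ r}$ would then be positive semidefinite, contradicting Theorem \ref{thm4}.

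The main obstacle I anticipate is simply verifying that the block-matrix PSD characterization applies in the degenerate situation where $X^{\circ r}$ itself may fail to be PSD; but this is handled cleanly because the pivot scalar $1$ is strictly positive, so the Haynsworth-type equivalence \cite[Problem 4.5.P21]{hj} is available without any further hypothesis. Everything else is bookkeeping about multiplicities, which is absorbed into the statement $\Hrk(X_0)=k+1$.
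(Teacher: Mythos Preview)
Your argument is correct. You mimic the Schur--complement step of Theorem~\ref{thm31}, but feed in Theorem~\ref{thm4} (the multiplicity version) directly at the level of the $(n+1)\times(n+1)$ matrix $X_0$. The computation $\Hrk(X_0)=k+1$ is right in both cases $n_{k+1}=0$ and $n_{k+1}>0$, since after the augmentation the distinct parameter values are exactly $x_1,\dots,x_k,0$; and with the strictly positive pivot $1$ the Schur--complement equivalence applies without any positivity assumption on $X^{\circ r}$, so your necessity argument goes through.

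The paper does it differently: instead of enlarging $X$, it shrinks it. It passes to the $k\times k$ matrix $\wt{X}$ built from the distinct nonzero values $x_1,\dots,x_k$, applies Theorem~\ref{thm31} to get that $\wt{X}^{\circ r}\ge E_k$ exactly for the stated $r$, and then transfers this to $X^{\circ r}-E$ by observing that $\wt{X}^{\circ r}-E_k$ is a principal submatrix of $X^{\circ r}-E$ and that $\rk(X^{\circ r}-E)\le k$ (equal parameters give equal rows, and a zero parameter gives a zero row). Your route avoids this rank/submatrix bookkeeping but relies on the stronger input Theorem~\ref{thm4}; the paper's route explains transparently why repetitions and the extra block of zeros are harmless. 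Either approach proves the theorem.
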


\begin{proof}
Let $\wt{X}$ be the $k\times k$ matrix given by \eqref{eq11} corresponding to the distinct numbers $x_1,\ldots,x_k.$
Suppose $E_k$ denotes the $k\times k$ matrix with all entries one.
By Theorem \ref{thm31}, we know that
$\wt{X}^{\circ r}\ge E_k$ if and only if either $r>k-1$ or $r=0,1,\ldots,k-1.$
We can prove the theorem by using the facts that
$\wt{X}^{\circ r}-E_k$ is a $k\times k$ principal submatrix of $X^{\circ r}-E$ and $X^{\circ r}-E$ has rank less than or equal to $k.$
\end{proof}

Finally we give the last theorem of the paper.

\begin{thm}\label{thm_main2}
Let $A$ be an $n\times n$ rank two, positive semidefinite matrix,
and let $B$ be an $n\times n$ rank one, positive semidefinite matrix
such that both $A$ and $B$ have all their entries positive.
Suppose $A\ge B,$ and $A-B$ has rank one.
Let $\Hrk(A)=k.$
If $A-B$ has all diagonal entries nonzero, then $A^{\circ r}\ge B^{\circ r}$ if and only if either $r>k-1$ or $r=0,1,\ldots,k-1$;
and if $A-B$ has a zero diagonal entry, then $A^{\circ r}\ge B^{\circ r}$ if and only if either $r>k-2$ or $r=0,1,\ldots,k-2.$\end{thm}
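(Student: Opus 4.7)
I plan to reduce Theorem \ref{thm_main2} to Theorem \ref{thm32} by applying Perron's theorem to write $A$ in the scaled form \eqref{eq11}, and then absorbing the rank-one comparator $B^{\circ r}$ into an entrywise quotient that is again of the form \eqref{eq11}, with the comparator becoming the all-ones matrix $E$.

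\textbf{Parameterization of $A$ and $B$.} Exactly as in the proof of Theorem \ref{thm_main}, Perron's theorem provides $A = pp^T + qq^T$ with $p = (p_1,\ldots,p_n)^T$ entrywise positive; setting $x_i = q_i/p_i$ and $D = \diag(p_1,\ldots,p_n)$ gives $A = DXD$ with $X = [1 + x_ix_j]$, and $\Hrk(A) = k$ equals the number of distinct $x_i$'s. Since $B$ is rank-one positive semidefinite with positive entries, write $B = ww^T$ with $w > 0$; since $A-B = vv^T$ is rank-one positive semidefinite, the frames $(p,q)$ and $(w,v)$ both reproduce $A$ and span the same two-dimensional subspace, so $[w\mid v] = [p\mid q]\,O$ for a unique $2\times 2$ orthogonal $O$. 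Writing $w = \alpha p + \beta q$ and $v = \gamma p + \delta q$ with $\alpha^2+\gamma^2 = \beta^2+\delta^2 = 1$ and $\alpha\beta+\gamma\delta = 0$, and setting $y_i = w_i/p_i = \alpha+\beta x_i > 0$, $z_i = v_i/p_i = \gamma+\delta x_i$, the orthogonality relations immediately yield the key identity
\begin{equation*}
y_iy_j + z_iz_j = 1 + x_ix_j \qquad (1 \le i,j \le n).
\end{equation*}

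\textbf{Reduction to Theorem \ref{thm32}.} The inequality $A^{\circ r}\ge B^{\circ r}$ is equivalent to $D^r\bigl(X^{\circ r} - (y^{\circ r})(y^{\circ r})^T\bigr)D^r \ge 0$; conjugating by the positive diagonal matrix $Y^{-r}$, where $Y = \diag(y_1,\ldots,y_n)$, reduces this further to $\widetilde{X}^{\circ r} \ge E$, where $\widetilde{X}_{ij} := X_{ij}/(y_iy_j)$. The identity above gives
\begin{equation*}
\widetilde{X}_{ij} = \frac{y_iy_j + z_iz_j}{y_iy_j} = 1 + t_it_j, \qquad t_i := z_i/y_i,
\end{equation*}
so $\widetilde{X}$ is again of the form \eqref{eq11} and is entrywise positive (since $X$ and $y$ are positive). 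The map $x \mapsto (\gamma+\delta x)/(\alpha+\beta x)$ is a nondegenerate M\"obius transformation (its coefficient matrix is orthogonal, hence has determinant $\pm 1$) whose denominator $y_i$ never vanishes, so distinct $x_i$ give distinct $t_i$, and $\widetilde{X}$ has exactly $k$ distinct $t$-values. Moreover $t_i = 0$ if and only if $v_i = 0$, i.e., precisely when $(A-B)_{ii} = 0$. Hence the distinct nonzero values of $t$ number $k$ in the first case of the theorem and $k-1$ in the second case; applying Theorem \ref{thm32} to $\widetilde{X}$ with this count in the role of ``$k$'' gives exactly the two stated ranges for $r$.

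\textbf{Main obstacle.} The crucial point is the absorption identity $y_iy_j + z_iz_j = 1 + x_ix_j$, which replaces the general rank-one comparator $B^{\circ r}$ by $E$ while keeping the scaled matrix in the family \eqref{eq11}; once this identity is recognized and the positivity of $y_i$ is used to legitimize the diagonal conjugation, the remainder is an easy count of M\"obius images and an invocation of Theorem \ref{thm32}.
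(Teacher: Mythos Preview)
Your proof is correct and ultimately lands on exactly the reduction the paper uses: your final variables $t_i = z_i/y_i = v_i/w_i$ coincide with the paper's $x_i = v_i/u_i$, and your $\widetilde{X}$ is the paper's $X$, so the invocation of Theorem \ref{thm32} and the case split on whether some $t_i$ vanishes are identical.

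The only difference is that you take an unnecessary detour through Perron's theorem and the orthogonal change of frame. The paper observes that the hypotheses already hand you the needed rank-one-plus-rank-one decomposition with a positive first factor: since $B=uu^T$ with $u$ entrywise positive and $A-B=vv^T$, one has $A_{ij}=u_iu_j\bigl(1+(v_i/u_i)(v_j/u_j)\bigr)$, so with $x_i=v_i/u_i$ and $D=\diag(u_1,\ldots,u_n)$ one gets $A^{\circ r}=D^rX^{\circ r}D^r$ and $B^{\circ r}=D^rED^r$ in one line. Your Perron vector $p$, the orthogonal matrix $O$, and the absorption identity $y_iy_j+z_iz_j=1+x_ix_j$ are all correct, but they just reconstruct this direct factorisation; the M\"obius injectivity argument is likewise unnecessary once one sees that the number of distinct $t_i=v_i/w_i$ equals $\Hrk(A)$ directly (two columns of $A$ are proportional iff the corresponding pairs $(u_i,v_i)$ and $(u_j,v_j)$ are, iff $v_i/u_i=v_j/u_j$).
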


\begin{proof}
Let $B=\begin{bmatrix}u_iu_j\end{bmatrix}$ and $A-B=\begin{bmatrix}v_iv_j\end{bmatrix}.$
Since $B$ is entrywise positive, all $u_i$'s are positive.
Take $x_i=v_i/u_i,$ $1\le i\le n,$ and take $X$ to be the $n\times n$ matrix $\begin{bmatrix}1+x_ix_j\end{bmatrix}.$
Since $A$ and $B$ are entrywise positive, so is $X.$
Let $D$ be the diagonal matrix $\diag(u_1^r,\ldots,u_n^r).$
Then 
\begin{equation}
A^{\circ r}=DX^{\circ r}D\textrm{ and }B^{\circ r}=DED.\label{eq41}
\end{equation}
Since $\Hrk(A)=k,$ exactly $k$ of the numbers $x_1,\ldots,x_n$ are distinct.
If $A-B$ has no zero diagonal entry, all these $k$ numbers are nonzero,
and if $A-B$ has a zero diagonal entry, then $k-1$ of these are nonzero.
We hence obtain the theorem by using Theorem \ref{thm32} and \eqref{eq41}.
\end{proof}

The Theorem \ref{thm_main2} is not true if we drop the condition that $A-B$ has rank one.
Let $x_1,\ldots,x_n$ be distinct positive numbers and let $A$ be the rank two, positive semidefinite matrix
$\begin{bmatrix}1+x_ix_j\end{bmatrix}.$
Let $n-2<r<n-1.$
We know that $A^{\circ r}$ is positive definite.
Hence we can find an $\alpha\in (0,1)$ such that
$A^{\circ r}\ge \alpha E.$
Take $\beta=\alpha^{1/r},$ and let $B$ be the $n\times n$ matrix $\beta E.$
Then we see that $A\ge B,$ $A-B$ has rank two,
and $A^{\circ r}\ge B^{\circ r}.$

{\bf Acknowledgements}:
The author thanks Professor Roger A. Horn and the two anonymous referees for their valuable suggestions that improved the readability of the paper. The author especially thanks one of the referees to suggest the use of Schur complements in the proof of Theorem \ref{thm31}, that led to much simplification of the proof.
Financial support from SERB
MATRICS grant number MTR/2018/000554 is also acknowledged.
\vskip.2in
 \section*{Conflict of interest}

 The author declares that she has no conflict of interest.


\begin{thebibliography}{}

%\bibitem{rbh}
%R. Bhatia, {\it Positive Definite Matrices}, Princeton University Press, 2007.
%\bibitem{br} R. B. Bapat and T. E. S. Raghavan, {\it Nonnegative Matrices and
%Applications,} Cambridge University Press, 1997.
%\bibitem{bgkp}
%A. Belton, D. Guillot, A. Khare and M. Putinar, {\it Moment-sequence %transforms}, arXiv:1610.05740.
\bibitem{rbh} R. Bhatia, {\it Matrix Analysis,} Springer, 1997.
\bibitem{bj}
R. Bhatia and T. Jain, {\it Inertia of the matrix $\begin{bmatrix}(p_i+p_j)^r\end{bmatrix}$}, J. Spectr. Theory, 5, 71-87 (2015).
%\bibitem{fj}
%S. Fallat and C. R. Johnson, {\it Totally Nonnegative Matrices}, Princeton %University Press, 2011. 
\bibitem{fs}
P. Fischer and J. D. Stegeman, {\it Fractional Hadamard powers of positive semidefinite matrices}, Linear Algebra Appl., 371, 53-74 (2003).
\bibitem{fh}
C. FitzGerald and R. Horn, {\it On fractional Hadamard powers of positive definite matrices}, J. Math. Anal. Appl., 61, 633-642 (1977).
\bibitem{gkr}
D. Guillot, A. Khare and B. Rajaratnam, {\it Complete characterization of Hadamard powers preserving Loewner positivity, monotonicity, and convexity}, J. Math. Anal. Appl., 425, 489-507 (2015).
\bibitem{gkr1}
D. Guillot, A. Khare and B. Rajaratnam, {\it Preserving positivity for matrices with sparsity constraints}, Trans. Amer. Math. Soc., 368, 8929-8953 (2016).
\bibitem{gkr2}
D. Guillot, A. Khare and B. Rajaratnam, {\it Preserving positivity for rank constrained matrices}, Trans. Amer. Math. Soc., 369, 6105-6145 (2017). 
\bibitem{h}
F. Hiai, {\it Monotonicity for entrywise functions of matrices}, Linear Algebra Appl., 431, 1125-1146 (2009).

\bibitem{ho}
R. Horn, {\it The theory of infinitely divisible matrices and kernels}, Trans. Amer. Math. Soc., 136, 269-286 (1969).
\bibitem{hj} R. Horn and C. R. Johnson, {\it Matrix Analysis,} Second ed., Cambridge University Press, 2013.
\bibitem{hy}
R. Horn and Z. Yang, {\it Rank of a Hadamard product}, Linear Algebra Appl., 591, 87-98 (2020).

\bibitem{j}
T. Jain, {\it Hadamard powers of some positive matrices}, Linear Algebra Appl., 528, 147-158 (2017).
%\bibitem{jw}
%C. R. Johnson and O. Walch, {\it Critical exponents: old and new}, Electron. J. Linear Algebra, 25 (2012), 72-83.

%\bibitem{k}
%S. Karlin, {\it Total Positivity,} Stanford University Press, 1968.

\bibitem{kr}
J. B. Kruskal, {\it Three-way arrays: Rank and uniqueness of trilinear decompositions, with application to arithmetic
complexity and statistics}, Linear Algebra Appl., 18, 95-138 (1977).

\bibitem{gg}
G. Polya and G. Szego, {\it Problems and Theorems in Analysis,}
Volume II, 4th ed., 1971.

\bibitem{s}
N. D. Sidiropolous and R. Bro, {\it On the uniqueness of multilinear decomposition of $N$-way arrays}, J. Chemometrics, 14, 229-239 (2000).

\bibitem{y}
Z. Yang, P. Stoica and J. Tang, {\it Source resolvability of spatial smoothing-based
subspace methods: AHadamard product perspective},
IEEE Trans. Signal Process., 67, 2543-2553 (2019).

\end{thebibliography}
\end{document}